\documentclass{article}
\usepackage{graphicx}
\usepackage{amsmath}
\usepackage{amsfonts}
\usepackage[indent=10pt]{parskip}
\usepackage[english]{babel}
\usepackage{babel}
\usepackage{csquotes}
\usepackage{hyperref}
\usepackage{amsthm}
\usepackage[margin=1in]{geometry}
\usepackage{biblatex}
\newtheorem{theorem}{Theorem}

\newtheorem{lemma}[theorem]{Lemma}

\theoremstyle{definition}
\newtheorem{definition}{Definition}[section]
\newtheorem*{remark}{Remark}
\usepackage{biblatex} 
\usepackage{parskip} 
\DeclareMathOperator{\supp}{supp}
\DeclareMathOperator{\Lip}{Lip}

\newcommand{\mres}{\mathbin{\vrule height 1.6ex depth 0pt width
0.13ex\vrule height 0.13ex depth 0pt width 1.3ex}}
\title{A universal support theorem for 1-Wasserstein optimal transport}
\author{Ng Ze-An}
\date{}

\begin{document}
\maketitle
\begin{abstract}
   We show that every optimal transport plan for the $1$-Wasserstein distance in $\mathbb R^n$ is supported on a closed $n+1$ rectifiable set. In particular, the support of any optimal plan has Hausdorff dimension at most $n+1$.
\end{abstract}

\section{Introduction}
The optimal transport problem, first investigated by Monge and later extended by Kantorovich, asks for the most efficient way to transport a prescribed mass distribution onto a target distribution with respect to some cost functional. In its modern formulation, it takes the following form. 

Let $\mu, \nu$ be probability measures on $\mathbb R^n$. A \textit{coupling} $\pi$ of $\mu$ and $\nu$ is a probability measures $\pi$ on $\mathbb R^n \times \mathbb R^n$ whose marginals on the first and second factor equal $\mu$ and $\nu$ respectively - that is, $\pi(E \times \mathbb R^n) = \mu(E)$ and $\pi(\mathbb R^n \times E) = \nu(E)$ for all Borel subsets $E$ of $\mathbb R^n$.

Let $c: \mathbb R^n \times \mathbb R^n \to \mathbb R_+$ be some non-negative cost function. The Monge-Kantorovich optimal transport problem in its general form asks to minimize
$$\int_{\mathbb R^n \times \mathbb R^n} c(x, y) \, d\pi(x,y)$$
over all couplings $\pi$ of $\mu, \nu$. Such couplings $\pi$ are customarily denoted as \textit{transport plans}.

Of all cost functions, the most fundamental and well studied are the \text{Wasserstein} costs $c(x, y) := d(x, y)^p$, with $1 \leq p \leq \infty$. When $p = 1$, the cost is just the distance function, which was the original cost investigated by Monge in early formulations of the problem. The resulting metric on probability measures
\begin{equation}\label{W1}\mathcal W_1 (\mu, \nu) := \inf_{\pi \text{ a coupling of }\mu, \nu} \int_{\mathbb R^n \times \mathbb R^n} |x-y|  \,d\pi(x,y)\end{equation}
is known as the $1$-Wasserstein distance, and is well defined whenever $\mu, \nu$ lie in the space $\mathcal P_1 (\mathbb R^n)$ of probability measures $\eta$ whose first moment
$$\int_{\mathbb R^n} |x| \, d\eta(x)$$
is finite. We refer to the book \cite{Maggi_2023} by Maggi for an excellent overview of the general theory, including a detailed analysis of the $1$-Wasserstein transport problem.

In this note, we prove a support theorem for optimal plans for the optimal transport problem (\ref{W1}) with respect to the distance cost. 

Our main result, Theorem \ref{support theorem} is that the support of any optimal plan between measures in $\mathcal P_1(\mathbb R^n)$ is $n+1$ rectifiable, in the sense of geometric measure theory. This is a strong form of geometric regularity which implies, in particular that the support of any optimal plan has Hausdorff dimension at most $n+1$. Because transport plans live in $\mathbb R^n \times \mathbb R^n$, the Hausdorff dimension can a priori be up to $2n$. Thus the main result shows that optimal transport plans are supported on a much thinner set than the ambient space.

Due to Kantorovich duality, the analysis of optimal transport plans for the $1$-Wasserstein distance is intimately tied with the geometry of $1$-Lipschitz functions. The proof of the main Theorem \ref{support theorem} thus relies crucially on a theorem in the fine analysis of Lipschitz functions, Theorem \ref{stretch set theorem}, which may be of independent interest.

The rest of the paper is structured as follows. In Section 2, we make some preliminary definitions and fix notation. In Section 3, we state and prove the two main theorems \ref{support theorem} and \ref{stretch set theorem}. In Section 4, we give an example showing that the upper bound of $n+1$ on the Hausdorff dimension cannot be improved in general. In Section 5, we state some natural further questions.

\section{Preliminaries}

Given points $a, b \in \mathbb R^n$, we use $[a, b]$ to denote the line segment 
$$\{ta + (1-t)b : \, t \in [0, 1]\}.$$ 

We denote by $\Lip (u)$ the best Lipschitz constant of $u: \mathbb R^n \to \mathbb R$, the infimum of all $L > 0$ such that 
$$|u(x) - u(y)| \leq L|x-y|.$$

\begin{definition}
The \textbf{stretch set} of a Lipschitz function $u$ is the set
$$\mathbf \Gamma(u) := \left \{(x, y) \in \mathbb R^n \times \mathbb R^n:  \, |u(x) - u(y)| = \Lip(u) \,|x-y|\right \}$$
of pairs of points on which $u$ realizes its best Lipschitz constant. Note that the stretch set contains in particular all points on the diagonal $\Delta := \{(x, x): \, x \in \mathbb R^n\}$.
\end{definition}
We write
$$\Gamma(u) = \{x \in \mathbb R^n \, | \, (x, y) \in \mathbf \Gamma(f) \setminus \Delta\}$$
for the projection of $\mathbf \Gamma(f) \setminus \Delta$ to $\mathbb R^n$.

\begin{definition}Let $E$ be an arbitrary subset of $\mathbb R^d$, and $k < d$ a nonzero integer. $E$ is said to be \textbf{$k$-rectifiable} if there exist countably many Lipschitz maps $\psi_i: \mathbb R^k \to \mathbb R^d$ such that $E \subseteq \bigcup_i \psi_i (\mathbb R^k)$.
\end{definition}

\begin{remark}Note that we have used the stronger convention which requires the rectifiable set to be fully contained within the image of countably many Lipschitz maps, instead of allowing for an exceptional $\mathcal H^k$-null set to be omitted. As usual it will be enough to define the maps $\psi_i$ on arbitrary subsets of $\mathbb R^k$, as they can then be extended to Lipschitz maps on all of $\mathbb R^k$ by the Kirszbraun-Valentine Lipschitz extension theorem.\end{remark}
\section{Main Theorems}

\begin{theorem}[Support theorem]\label{support theorem}Let $\pi$ be an optimal transport plan between $\mu,\nu \in \mathcal P_1 (\mathbb R^n)$ for the $1$-Wasserstein distance on $\mathbb R^n$. Then the support of $\pi$ is contained within a closed $n+1$ rectifiable set, which can be taken to be the stretch set of any associated Kantorovich potential. In particular, the support of $\pi$ has Hausdorff dimension at most $n+1$. \end{theorem}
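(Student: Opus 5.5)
The plan is to use Kantorovich duality to place $\supp \pi$ inside the stretch set of a potential, and then to show that this stretch set is a closed $n+1$ rectifiable set. The second step is the content of Theorem \ref{stretch set theorem}.

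\textbf{Step 1 (duality).} By Kantorovich--Rubinstein duality there is a Kantorovich potential $u$ with $\Lip(u) \leq 1$ such that
$$\int |x-y|\, d\pi(x,y) = \int u \, d\mu - \int u\, d\nu = \int (u(x)-u(y))\, d\pi(x,y).$$
The integrand $|x-y| - (u(x)-u(y))$ is nonnegative, and it integrates to zero against $\pi$. It therefore vanishes $\pi$-a.e. Since the integrand is continuous, it vanishes on all of $\supp \pi$.

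If $\Lip(u)=1$, this says exactly that $\supp\pi \subseteq \mathbf\Gamma(u)$. If $\Lip(u)<1$, the equality forces $x=y$ on $\supp\pi$, so $\pi$ lives on $\Delta$. This case is trivial: $\Delta$ is a closed $n$-dimensional Lipschitz graph and is contained in $\mathbf\Gamma(u)$ anyway. Closedness of $\mathbf\Gamma(u)$ is immediate, because it is the zero set of the continuous function $(x,y)\mapsto \Lip(u)|x-y| - |u(x)-u(y)|$.

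\textbf{Step 2 (rectifiability of the stretch set).} The remaining task is to show that $\mathbf\Gamma(u)$ is $n+1$ rectifiable for any Lipschitz $u$; I expect this is Theorem \ref{stretch set theorem}. Normalize to $\Lip(u)=1$. If $(x,y)\in\mathbf\Gamma(u)\setminus\Delta$ with, say, $u(x)-u(y)=|x-y|$, the standard argument shows that $u$ is affine with slope $1$ along the whole segment $[x,y]$. Every point of the open segment is then a point of differentiability of $u$, with $\nabla u = (x-y)/|x-y|$ there; these are the transport rays.

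Next, parametrize pairs by a point $z$ on the segment together with the two signed lengths along the ray direction $e(z)=\nabla u(z)$:
$$(z,s,t)\mapsto (z+s\,e(z),\, z-t\,e(z)).$$
This gives $n+2$ parameters. Reducing to $n+1$ uses the fact that rays are one-dimensional. I would instead parametrize each ray by a transversal point $w$ lying in a hyperplane slice ($n-1$ parameters) together with the pair $(s,t)$ of endpoints' positions along the ray (2 parameters). That totals $n+1$ parameters.

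\textbf{Main obstacle.} The key difficulty is proving that the ray direction map $e$ is Lipschitz on suitable pieces. To get it, I would decompose $\Gamma(u)$ into countably many pieces. On each piece the ray through $z$ extends at least length $\delta$ on both sides of $z$, and $e(z)$ lies in a small cone around a fixed direction. The classical Lipschitz estimate for directions of non-crossing transport rays (as in Evans--Gangbo / Caffarelli--Feldman--McCann) then bounds $|e(z)-e(z')|\lesssim |z-z'|/\delta$ on each piece. Composing with the parametrization above and applying Kirszbraun extension produces countably many Lipschitz maps from subsets of $\mathbb R^{n+1}$ whose images cover $\mathbf\Gamma(u)\setminus\Delta$. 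The diagonal $\Delta$ is covered by one more map. Finally, Lipschitz images of $\mathbb R^{n+1}$ have $\sigma$-finite $\mathcal H^{n+1}$ measure, which gives the Hausdorff dimension bound of at most $n+1$.
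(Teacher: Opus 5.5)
Your proposal is correct and is essentially the paper's proof. Duality puts $\supp\pi$ inside the closed stretch set, which is then covered by the diagonal chart together with charts $(w,s,t)\mapsto$ (two points on the ray through $w$), where $w$ ranges over hyperplane-slice points whose ray extends a fixed length $\delta$ (the paper's $1/m$) on both sides, so that the direction field is Lipschitz there (Lemma \ref{lipschitz angle}). Your reliance on the classical estimate for that last step is well placed: squaring and adding $u(z+\delta e(z))-u(z'-\delta e(z'))\le|z-z'+\delta(e(z)+e(z'))|$ and $u(z'+\delta e(z'))-u(z-\delta e(z))\le|z'-z+\delta(e(z)+e(z'))|$ cancels the first-order cross terms and gives $|e(z)-e(z')|\le|z-z'|/\delta$. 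By contrast, the single triangle-inequality comparison in the paper's written proof of Lemma \ref{lipschitz angle} rests on the false identity $\cos\theta=\sin(\pi-\theta)-1$, and once that identity is corrected it yields only a H\"older-$1/2$ bound.
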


We recall that as a consequence of Kantorovich duality, to every optimal plan $\pi$ there is associated a $1$-Lipschitz \text{Kantorovich potential} $f: \mathbb R^n \to \mathbb R$ such that $\pi(\Gamma(f)) = 1$ (\cite{Maggi_2023}, Theorem 3.17). Theorem \ref{support theorem} is thus an immediate consequence of the following general theorem on Lipschitz functions.

\begin{theorem}[Rectifiability of the stretch set]\label{stretch set theorem}
Let $f: \mathbb R^n \to \mathbb R$ be a Lipschitz function. Then the stretch set $\mathbf \Gamma(f)$ is closed and $n+1$ rectifiable.
\end{theorem}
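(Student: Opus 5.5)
Throughout, normalize $\Lip(f)=1$ by scaling. The degenerate case $\Lip(f)=0$ has to be set aside: then $\mathbf\Gamma(f)=\mathbb R^n\times\mathbb R^n$, which is not $n+1$ rectifiable once $n\ge 2$. So I read the statement as assuming $f$ non-constant; for the support theorem this is harmless, since a constant potential only arises when $\mu=\nu$, and then the optimal plan is the diagonal one.

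\emph{Closedness.} The map $(x,y)\mapsto |f(x)-f(y)|-|x-y|$ is continuous, and $\mathbf\Gamma(f)$ is its zero set, so $\mathbf\Gamma(f)$ is closed.

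\emph{Decomposition.} For rectifiability, first split off the diagonal $\Delta$. It is the image of the Lipschitz map $x\mapsto(x,x)$ from $\mathbb R^n$, hence $n+1$ rectifiable. Next, $\mathbf\Gamma(f)\setminus\Delta$ is the union of
$$\mathbf\Gamma^+=\{f(y)-f(x)=|x-y|>0\}\quad\text{and}\quad\mathbf\Gamma^-=\{f(x)-f(y)=|x-y|>0\}.$$
The coordinate swap $(x,y)\mapsto(y,x)$ is an isometry exchanging them, so it suffices to treat $\mathbf\Gamma^+$.

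\emph{Parametrization of $\mathbf\Gamma^+$.} For $(x,y)\in\mathbf\Gamma^+$ set
$$m=\tfrac{x+y}{2},\qquad \ell=|x-y|,\qquad e=\tfrac{y-x}{\ell}.$$
The 1-Lipschitz bound combined with equality at the endpoints forces $f(m+se)=f(m)+s$ for $|s|\le \ell/2$. For rationals $0<r<R$, let $A_{r,R}$ be the set of midpoints $m$ of pairs in $\mathbf\Gamma^+$ with $r\le\ell\le R$. The key claim is that each $m\in A_{r,R}$ determines $e=e(m)$ uniquely, and that $m\mapsto e(m)$ is $\tfrac{2}{r}$-Lipschitz on $A_{r,R}$. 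Granting this, the map
$$(m,\ell)\mapsto\bigl(m-\tfrac{\ell}{2}e(m),\;m+\tfrac{\ell}{2}e(m)\bigr)$$
is Lipschitz on $A_{r,R}\times[r,R]\subset\mathbb R^{n+1}$, with constant depending on $r$ and $R$. Its image contains every pair in $\mathbf\Gamma^+$ with $r\le\ell\le R$. Taking the countable union over rational $r<R$ covers $\mathbf\Gamma^+$, and Kirszbraun extension (as in the Remark) turns these into maps defined on all of $\mathbb R^{n+1}$.

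\emph{Key estimate (the main obstacle).} Let $z,w$ be points with directions $e_1,e_2$ such that $f(z+se_1)=f(z)+s$ and $f(w+se_2)=f(w)+s$ for $|s|\le\rho$. Put $v=z-w$ and $a=e_1+e_2$. The 1-Lipschitz property gives
$$f(z)-f(w)+2\rho=f(z+\rho e_1)-f(w-\rho e_2)\le|v+\rho a|,$$
and symmetrically
$$f(w)-f(z)+2\rho\le|-v+\rho a|.$$
Adding the two inequalities and using concavity of the square root, i.e. $\sqrt{\alpha+\beta}+\sqrt{\alpha-\beta}\le2\sqrt\alpha$, yields
$$4\rho\le 2\sqrt{\rho^2|a|^2+|v|^2}.$$
Since $|a|^2=4-|e_1-e_2|^2$, this rearranges to
$$\rho\,|e_1-e_2|\le|z-w|.$$

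Applying this with $z=w=m$ shows the direction at $m$ is unique. Applying it with $\rho=r/2$ gives the claimed $\tfrac{2}{r}$-Lipschitz bound for $e$ on $A_{r,R}$, since every midpoint in $A_{r,R}$ has ray length at least $r/2$ on both sides.

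The main point to verify carefully is this estimate, together with the bookkeeping that each pair in $\mathbf\Gamma^+$ with $r\le\ell\le R$ actually lies in the image of the corresponding parametrization.
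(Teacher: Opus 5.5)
Your proof is correct, and it differs from the paper's in both the parametrization and the key estimate.

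\textbf{Parametrization.} The paper fixes a countable dense family of affine hyperplanes $\mathbf H_{i,q}$. It uses base points $x\in\Gamma_{i,q,m}\subset\mathbf H_{i,q}$ ($n-1$ parameters) together with two arclength parameters $(s,t)$. A single base point on each ray then covers all of $\ell\times\ell$ in both orders, matching Lemma~\ref{stretch ray decomposition}. The price is the transversality argument of Lemma~\ref{nonempty intersection}, which guarantees that every ray crosses some slice with room on both sides. You instead parametrize each oriented pair by its own midpoint ($n$ parameters) and length (one parameter). The only cost is the trivial split $\mathbf\Gamma^+\cup\mathbf\Gamma^-$, and no Grassmannian or intersection lemma is needed.

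\textbf{Direction estimate.} This is the more substantive difference. The paper's Lemma~\ref{lipschitz angle} uses a single cross inequality. It bounds $f(x)-f(y)\ge-|x-y|$ and $\bigl|x-y+\tfrac1m(e(x)+e(y))\bigr|\le|x-y|+\tfrac1m|e(x)+e(y)|$, and then invokes ``$\cos\theta=\sin(\pi-\theta)-1$'', which is false (take $\theta=0$). With the correct relation $|e(x)+e(y)|^2=4-|e(x)-e(y)|^2$, that one-sided chain gives only $|e(x)-e(y)|\le\sqrt{8m|x-y|}$, a H\"older-$\tfrac12$ bound. Your symmetrization is exactly what yields a genuine Lipschitz bound: adding the two cross inequalities cancels $f(z)-f(w)$, and $\sqrt{\alpha+\beta}+\sqrt{\alpha-\beta}\le2\sqrt\alpha$ cancels the first-order term $2\rho\langle v,a\rangle$. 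The result, $\rho|e_1-e_2|\le|z-w|$, is global. Uniqueness of direction is the case $z=w$, and the paper's small-scale case analysis excluding $e(x)\approx-e(y)$ becomes unnecessary. Inserted into the paper's scheme with $\rho=1/m$, your estimate would give $L_e=m$ and repair that lemma.

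\textbf{Degenerate case.} Your caveat about $\Lip(f)=0$ is also right. For constant $f$ and $n\ge2$ the stretch set is all of $\mathbb R^{2n}$, and the paper's ``after scaling'' reduction tacitly assumes $f$ is nonconstant.
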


\begin{proof}[Proof of Theorem \ref{support theorem} assuming Theorem \ref{stretch set theorem}.]
Let $\pi$ be an optimal transport map for the $1$-Wasserstein distance. By Kantorovich duality, choose any Kantorovich potential $f$ such that $\pi(\mathbf \Gamma(f)) = 1$. Since $\mathbf \Gamma(f)$ is closed, we have that $\text{supp}(\pi) \subseteq \mathbf \Gamma(f)$. Both claims then follow immediately from Theorem \ref{stretch set theorem}.
\end{proof}

The rest of this section will thus be dedicated to proving Theorem \ref{stretch set theorem}. We first make an important pair of definitions.

\begin{definition}
Let $f: \mathbb R^n \to \mathbb R$ be a Lipschitz function. 

A \textbf{stretch ray} of $f$ is a non-degenerate line segment $[a, b]$ such that
$$\frac{|f(s) - f(t)|}{|s-t|} = \text{Lip}(f)$$
for all distinct points $s, t$ in $[a, b]$. 

An \textbf{oriented stretch ray} of $f$ is an ordered tuple $[[a, b \, ]]$ such that
$$f((1-t)a+tb) - f((1-s)a + sb) = \text{Lip}(f) \, (t-s) \,|b-a|$$
for all $s < t$ in $[0, 1]$.\end{definition}

We now present the proof of Theorem \ref{stretch set theorem}. For continuity of exposition, we will prefer to simply state key technical lemmas, deferring their proofs until after the main proof.

\begin{proof}[Proof of Theorem \ref{stretch set theorem}]
Let $f: \mathbb R^n \to \mathbb R$ be an arbitrary Lipschitz function, which after scaling we can assume without loss of generality is $1$-Lipschitz. The following lemma is narratively independent of the rest of the proof, so we prefer to state it first.
\begin{lemma}\label{closed} The stretch set $\mathbf \Gamma(f)$ is closed.\end{lemma}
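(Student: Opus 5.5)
The plan is to write $\mathbf \Gamma(f)$ as the zero set of a continuous function on $\mathbb R^n \times \mathbb R^n$. We have normalized $f$ to be $1$-Lipschitz. If $\Lip(f) = 0$, then $f$ is constant, and $\mathbf \Gamma(f) = \mathbb R^n \times \mathbb R^n$ is trivially closed. So I assume $\Lip(f) = 1$. Then
$$\mathbf \Gamma(f) = \{(x,y) \in \mathbb R^n \times \mathbb R^n : \, |x-y| - |f(x) - f(y)| = 0\}.$$

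First I define $g: \mathbb R^n \times \mathbb R^n \to \mathbb R$ by $g(x,y) := |x-y| - |f(x)-f(y)|$. Next I check that $g$ is continuous. The map $(x,y) \mapsto |x-y|$ is continuous, and $f$ is Lipschitz, hence continuous. So $(x,y) \mapsto f(x) - f(y)$ is continuous, and taking absolute values preserves continuity. It follows that $\mathbf \Gamma(f) = g^{-1}(\{0\})$ is the preimage of the closed set $\{0\}$ under a continuous map, and is therefore closed.

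Alternatively, one can argue with sequences. Take $(x_k, y_k) \in \mathbf \Gamma(f)$ converging to $(x,y)$. Each term satisfies $|f(x_k) - f(y_k)| = \Lip(f)\,|x_k - y_k|$, and passing to the limit using continuity of $f$ gives the same identity at $(x,y)$.

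There is no real obstacle here. The only point needing care is that the defining relation uses the fixed constant $\Lip(f)$ rather than a difference quotient. Because of this, degenerate limits where $x_k - y_k \to 0$ cause no trouble: the diagonal points $(x,x)$ lie in $\mathbf \Gamma(f)$ by definition, and the equation $|f(x) - f(y)| = \Lip(f)\,|x-y|$ is preserved under limits without any division.
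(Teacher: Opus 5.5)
Your proposal is correct and is essentially the paper's argument: the paper takes a sequence $(x_i,y_i)\in\mathbf\Gamma(f)$ converging to $(x,y)$, bounds $|f(x)-f(y)|$ above and below by $|x_i-y_i|\pm(|x-x_i|+|y-y_i|)$ using the $1$-Lipschitz property, and passes to the limit, which is just an explicit check that your function $g$ is continuous. Your separate handling of the degenerate case $\Lip(f)=0$, where normalizing to $\Lip(f)=1$ is impossible, is a small point of care that the paper skips.
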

Next we note that the stretch set can be written as the potentially uncountable union of stretch rays.

\begin{lemma}  \label{stretch ray decomposition} We have
\begin{equation} \mathbf \Gamma(f) = \Delta \cup  \bigcup_{\ell} \, \ell \times \ell \end{equation}
where $\ell$ runs over all stretch rays of $f$.\end{lemma}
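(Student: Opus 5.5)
We prove the two inclusions separately, using throughout the normalization $\Lip(f)=1$ made in the proof of Theorem \ref{stretch set theorem}.

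\emph{The inclusion $\supseteq$.} The diagonal $\Delta$ lies in $\mathbf \Gamma(f)$ by definition. Now let $\ell$ be a stretch ray and $(s,t)\in \ell\times\ell$. If $s=t$, then $(s,t)\in\Delta$. If $s\neq t$, the definition of a stretch ray gives $|f(s)-f(t)|=|s-t|$, so $(s,t)\in\mathbf \Gamma(f)$.

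\emph{The inclusion $\subseteq$.} Take $(x,y)\in\mathbf \Gamma(f)\setminus\Delta$. After possibly swapping $x$ and $y$, we may assume $f(y)-f(x)=|y-x|>0$. We claim that $\ell:=[x,y]$ is a stretch ray; this gives $(x,y)\in\ell\times\ell$.

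The key step is the following observation. For $z\in[x,y]$ we have $|x-z|+|z-y|=|x-y|$. The $1$-Lipschitz bound gives
$$f(z)-f(x)\le |z-x| \quad\text{and}\quad f(y)-f(z)\le |y-z|.$$
Adding these yields
$$f(y)-f(x)\le |z-x|+|y-z| = |y-x|,$$
and the left side equals $|y-x|$. So equality holds in the sum, and since each inequality is individually $\le$, both must be equalities. Hence
$$f(z)=f(x)+|z-x| \qquad \text{for every } z\in[x,y].$$

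Parametrize $z_\tau=(1-\tau)x+\tau y$ for $\tau\in[0,1]$. The identity above reads $f(z_\tau)=f(x)+\tau|y-x|$. Therefore, for $s<t$ in $[0,1]$,
$$f(z_t)-f(z_s)=(t-s)|y-x|=|z_t-z_s|.$$
This shows that $[[x,y]]$ is an oriented stretch ray, and in particular that $[x,y]$ is a stretch ray: every pair of distinct points $s,t\in[x,y]$ satisfies $|f(s)-f(t)|=|s-t|$. The segment is non-degenerate because $x\neq y$.

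Neither step poses a real obstacle. The only point requiring care is the equality-case argument, namely that two inequalities whose sum is an equality must each be equalities. This is exactly where the $1$-Lipschitz normalization is used: if $\Lip(f)=L$ instead, every statement is scaled by $L$ and the same argument goes through.
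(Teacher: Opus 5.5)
Your proof is correct and follows essentially the same route as the paper. Both arguments rest on the same fact: the $1$-Lipschitz bound is saturated at the endpoints $x,y$, and distances add along the segment, so it must also be saturated between any two intermediate points. The difference is cosmetic. You pass through the three-point equality case to get $f(z)=f(x)+|z-x|$ on $[x,y]$, which also yields the oriented stretch ray with the correct sign; the paper's proof states that sign somewhat loosely as $f(a)-f(b)=|a-b|$. The paper instead applies the reverse triangle inequality directly to the four collinear points $x,a,b,y$.
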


Note that the union over stretch rays above is not a disjoint union, and in general contains a large amount of overlap.

Now we turn to producing the covering Lipschitz charts. Fix a dense, countable set $\{\mathbf H_i\}_{i \in I}$ of hyperplanes in the Grassmannian $\textbf{Gr}_{n-1, n}$ of $n-1$ dimensional hyperplanes through the origin in $\mathbb R^n$. 

For each $q\in \mathbb Q$, set $\mathbf H_{i, q} := \mathbf H_i + q \nu_i$, where for each $i \in I$, $\nu_i$ is an arbitrarily chosen unit normal to $\mathbf H_i$.

For each $i \in I$, set $ \Gamma_{i, q} :=  \Gamma(f) \cap \mathbf H_{i,q}$.

Then for each $m \in \mathbb Z_+$, define the set $ \Gamma_{i, q, m}$ as
$$\left \{x \in \Gamma_{i, q} \, : \,\exists e \in S^{n-1}  \text{ such that } \left [\left [x-\frac{e}{m}, x+\frac{e}{m} \right ] \right ] \text{ is an oriented stretch ray} \right \}.$$

Intuitively, $\Gamma_{i, q, m}$ is the set of points of stretch rays that intersect the hyperplane $\mathbf H_{i, q}$ and extend sufficiently long in both directions.

\begin{lemma}\label{unique direction} The unit vector $e_{i,q,m} (x) \in S^{n-1}$ above is unique for each $x \in \Gamma_{i,q,m}$.\end{lemma}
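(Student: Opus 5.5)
The plan is to argue by contradiction. Suppose $x \in \Gamma_{i,q,m}$ admits two distinct unit vectors $e, e' \in S^{n-1}$ such that both $[[x - e/m, x + e/m]]$ and $[[x - e'/m, x + e'/m]]$ are oriented stretch rays of the $1$-Lipschitz function $f$. Write $r = 1/m$. From the definition of an oriented stretch ray (with $\mathrm{Lip}(f)=1$, after the normalization made at the start of the proof of Theorem \ref{stretch set theorem}), taking the parameters $s = 1/2$ and $t = 0$ or $1$ gives
$$f(x + re) = f(x) + r, \qquad f(x - re) = f(x) - r,$$
and likewise with $e'$ in place of $e$. These four values are all the input needed.

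Next I combine the endpoints of the two rays crosswise. Subtracting gives $f(x + re) - f(x - re') = 2r$. The $1$-Lipschitz property then forces
$$2r \le |(x+re) - (x - re')| = r\,|e + e'|.$$
Since $e, e'$ are unit vectors, $|e+e'| \le 2$, with equality iff $e = e'$. So $|e+e'| = 2$, which gives $e = e'$, a contradiction. The other pairing, $x + re'$ against $x - re$, gives the same conclusion. This is the triangle-inequality rigidity that is standard in the $1$-Wasserstein setting: strict convexity of the Euclidean norm forbids two distinct stretch directions through an interior point.

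The only point needing care is that uniqueness is stated for $e \in S^{n-1}$ as an oriented direction. The orientation is fixed by the sign convention $f$ increasing along $e$. The vector $-e$ would give $f(x - re) = f(x) + r$, which contradicts $f(x - re) = f(x) - r$. So the computation above excludes $-e$ automatically, and no separate case is needed. I do not expect a real obstacle. The main thing to check is that the definition of oriented stretch ray really yields the midpoint identities, which it does by direct substitution of the parameters $s, t \in \{0, 1/2, 1\}$. Finally, note that membership in $\mathbf H_{i,q}$ and in $\Gamma(f)$ plays no role; uniqueness holds for any point that is the midpoint of an oriented stretch ray of length $2/m$.
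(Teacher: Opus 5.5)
Your proposal is correct and is essentially the paper's own argument: you pair the endpoint $x+e/m$ of one ray with the endpoint $x-e'/m$ of the other, so that $f$ differs by $2/m$ between two points at distance $|e+e'|/m<2/m$ when $e\neq e'$, contradicting the $1$-Lipschitz bound. One minor slip: the identity $f(x-re)=f(x)-r$ comes from the parameters $s=0$, $t=1/2$, not $s=1/2$, $t=0$, because the definition of an oriented stretch ray requires $s<t$.
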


\begin{lemma}\label{nonempty intersection}Let $\ell$ be a stretch ray of $f$. Then there exists some $\Gamma_{i,q,m}$ such that $\ell \cap \Gamma_{i,q,m}$ is nonempty.
\end{lemma}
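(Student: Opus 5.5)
Given a stretch ray $\ell = [a,b]$ with $a \neq b$, we first orient it so that $f$ increases from $a$ to $b$. Then $[[a,b]]$ is an oriented stretch ray: along a stretch ray, $|f(s)-f(t)| = |s-t|$ for all $s,t$, and by continuity together with the equality case of the triangle inequality the sign of $f(s)-f(t)$ is constant with respect to the ordering along the segment. Concretely, the function $t \mapsto f((1-t)a+tb)$ satisfies $|g(t)-g(s)| = |t-s|\,|b-a|$ for all $s,t$. A real function of one variable with this property is affine with slope $\pm|b-a|$. Indeed, if $g(t_1)-g(t_0)$ and $g(t_2)-g(t_1)$ had opposite signs for $t_0<t_1<t_2$, then $|g(t_2)-g(t_0)| < (t_2-t_0)|b-a|$, a contradiction. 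After possibly swapping $a$ and $b$, the slope is $+|b-a|$. Set $e := (b-a)/|b-a|$ and let $c := (a+b)/2$ be the midpoint. Choose $m \in \mathbb Z_+$ with $2/m < |b-a|/2$, so that $1/m < |b-a|/4$.

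Next we pick a hyperplane. Since $\{\mathbf H_i\}$ is dense in the Grassmannian, we choose $i$ with $\mathbf H_i$ not containing $e$, so that $\langle e, \nu_i\rangle \neq 0$. This is possible because the hyperplanes containing $e$ form a closed proper subset with empty interior. The affine hyperplanes $\mathbf H_{i,q}$ with $q \in \mathbb Q$ are the level sets $\{x : \langle x, \nu_i\rangle = q\}$. Along $\ell$, the function $t \mapsto \langle (1-t)a+tb, \nu_i\rangle$ is affine with nonzero slope $\langle b-a, \nu_i\rangle$. Hence, as $t$ ranges over the middle interval $(1/4, 3/4)$, its values cover an open interval of $\mathbb R$, and we can pick a rational $q$ in that interval. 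This produces a point $x \in \ell \cap \mathbf H_{i,q}$ of the form $x = (1-t)a+tb$ with $t \in (1/4,3/4)$. The distance from $x$ to each endpoint is therefore at least $|b-a|/4 > 1/m$.

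We now verify that $x \in \Gamma_{i,q,m}$. First, $x \in \Gamma(f)$, since $(x,b)$ (or $(x,a)$ if $x = b$, which cannot happen here) lies in $\ell \times \ell \setminus \Delta \subseteq \mathbf\Gamma(f)\setminus\Delta$ by Lemma \ref{stretch ray decomposition}, or directly from the definition of a stretch ray. Second, the segment $[x - e/m, x+e/m]$ is contained in $\ell$ by the distance estimate above, and it is traversed in the direction $e$. Restricting the affine identity $f((1-t)a+tb) - f((1-s)a+sb) = (t-s)|b-a|$ to this subsegment and reparametrizing gives exactly the oriented stretch ray condition for $[[x-e/m, x+e/m]]$. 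Thus $x \in \ell \cap \Gamma_{i,q,m}$.

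The main, though mild, obstacle is the first step: justifying that an unoriented stretch ray admits a consistent orientation. This amounts to the one-dimensional rigidity argument showing that $g$ is monotone, and hence affine. The remaining steps are bookkeeping with the density of $\{\mathbf H_i\}$ and the density of $\mathbb Q$.
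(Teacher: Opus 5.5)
Your proof is correct and follows the same route as the paper: orient $\ell$, use density to pick an $\mathbf H_i$ transverse to $\ell$, take a rational translate $\mathbf H_{i,q}$ meeting the middle part of $\ell$, and choose $m$ so that the centred subsegment of half-length $1/m$ stays inside $\ell$. Your version is slightly tidier than the paper's. You justify the orientation step and locate $q$ through the image of $(1/4,3/4)$ under $t\mapsto\langle (1-t)a+tb,\nu_i\rangle$, rather than by $\delta$-approximating an offset that should really be signed. You also take $m$ to be an integer, whereas the paper's choice $m=3/|a-b|$ need not be.
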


We now define the charts $\psi_{i, q, m, k}: \Gamma_{i, q, m} \times [-k, k]^2 \to  \mathbb R^n \times \mathbb R^n$ by
$$\psi_{i,q, m, k} \,  (x, s, t) := (x + se_{i,q,m}(x) , x + te_{i,q,m}(x))$$
where the new index $k$ runs over $\mathbb Z_+$.
\begin{lemma}\label{contains every stretch}The countable union 
\begin{equation}\label{big union}\bigcup_{i \in I} \bigcup_{q \in \mathbb Q}  \bigcup_{m \in \mathbb Z_+} \bigcup_{k \in \mathbb Z_+} \psi_{i,q,m,k} \, ( \Gamma_{i, q, m} \times [-k, k]^2)\end{equation}
contains $\ell \times \ell$ for every stretch ray $\ell$ of $f$. \end{lemma}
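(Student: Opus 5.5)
Fix a stretch ray $\ell = [a,b]$ of $f$ (with $f$ normalized to be $1$-Lipschitz). We have to show that every pair $(y,z) \in \ell \times \ell$ lies in the image of some chart $\psi_{i,q,m,k}$. The plan has three steps: pick a point $x$ of $\ell$ lying in some $\Gamma_{i,q,m}$, identify the direction $e_{i,q,m}(x)$ with the direction of $\ell$, and then write $y$ and $z$ in the coordinates $(x,s,t)$.

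\textbf{Step 1: choose $x$.} By Lemma \ref{nonempty intersection} there are indices $i,q,m$ and a point $x \in \ell \cap \Gamma_{i,q,m}$. Some care is needed here: we want $x$ to lie in the relative interior of $\ell$, with $\ell$ extending at least $1/m$ beyond $x$ in both directions. This is presumably what the construction in Lemma \ref{nonempty intersection} provides: pick a hyperplane $\mathbf H_{i,q}$ transversal to $\ell$ that crosses $\ell$ near its midpoint, and take $m$ large. If the lemma as stated only guarantees nonemptiness, I would reprove this stronger version along those lines.

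\textbf{Step 2: identify the direction.} Set $u = (b-a)/|b-a|$. Orient $\ell$ so that $f$ increases along it. This is possible because $|f(s)-f(t)| = |s-t|$ on a segment forces $f$ to be affine with slope $\pm 1$ along the segment, by continuity and a sign argument. With this orientation, $[[x - u/m,\, x + u/m]]$ is an oriented stretch ray whenever $x$ is at distance at least $1/m$ from both endpoints of $\ell$. Lemma \ref{unique direction} then gives $e_{i,q,m}(x) = u$.

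I expect this to be the main obstacle. If $x$ sits within $1/m$ of an endpoint, the witnessing direction $e$ from the definition of $\Gamma_{i,q,m}$ belongs to some other stretch ray through $x$, and we must show it is parallel to $\ell$. The idea I would use is that stretch rays cannot cross at interior points at an angle. Suppose $f(x+se) = f(x)+s$ and $f(x+tu) = f(x)+t$ for small $s,t$ with $e \neq u$. Then
$$f(x+su) - f(x-se) = 2s > |su + se|$$
for small $s$, which contradicts $1$-Lipschitzness. This requires $\ell$ to extend a little on both sides of $x$. Since we may shrink to any subsegment, it is cleanest to restrict attention to a short subsegment $\ell'$ around an interior point.

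\textbf{Step 3: cover the whole of $\ell \times \ell$.} With $e_{i,q,m}(x) = u$, every point $y \in \ell$ can be written as $y = x + s u$ with $|s| \le |b-a|$. Choosing an integer $k \ge |b-a|$, any pair $(y,z) \in \ell\times\ell$ equals $\psi_{i,q,m,k}(x,s,t)$ for suitable $s,t \in [-k,k]$.

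The subtlety is that the chosen $x$ must lie in $\Gamma_{i,q,m}$, which needs a stretch ray of length $2/m$ around $x$. This is fine, because the whole of $\ell$ is available to extend the direction once uniqueness is known: $\ell$ passes through $x$ in direction $u$. Thus we only need one good $x$ in the interior of $\ell$, and arbitrary $y,z \in \ell$ are reached by taking $k$ large.
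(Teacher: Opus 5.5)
Your proof is correct and follows the paper's argument: pick $x \in \ell \cap \Gamma_{i,q,m}$ via Lemma \ref{nonempty intersection}, take $k$ at least the length of $\ell$, and observe that $\psi_{i,q,m,k}(\{x\}\times[-k,k]^2) \supseteq \ell\times\ell$. Your Step 2 shows that $e_{i,q,m}(x)$ is parallel to $\ell$, a point the paper's proof uses without comment; your crossing argument in fact works for any $x \in \ell \cap \Gamma_{i,q,m}$, because it only needs $\ell$ to extend on one side of $x$ (if $\ell$ extends only backward from $x$, compare $f(x+se)$ with $f(x-su)$ instead).
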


Combining Lemma \ref{stretch ray decomposition} and \ref{contains every stretch} we see that the union (\ref{big union}) contains the entire stretch set $\mathbf \Gamma(f)$, except possibly the diagonal $\Delta$. Thus along with the single diagonal chart $\psi_{\Delta}: \mathbb R^{n} \times \{0\} \to \mathbb R^n \times \mathbb R^n$ defined by
$$\psi_{\Delta}(x, 0) = (x, x)$$
which is easily seen to be Lipschitz, we obtain the desired countable chart covering of $\mathbf\Gamma(f)$.

It is left only to show that each chart $\psi_{i,q,m,k}$ is Lipschitz, which will follow from the following key Lipschitz estimate.

\begin{lemma}\label{lipschitz angle} Each function $e_{i,q,m}:  \Gamma_{i,q,m} \to  S^{n-1}$ is Lipschitz.
\end{lemma}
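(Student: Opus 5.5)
The plan is to compare the two oriented stretch rays through a pair of points $x, y \in \Gamma_{i,q,m}$ using only the $1$-Lipschitz property of $f$ (we normalized $\Lip(f)=1$), evaluated at the endpoints of these rays. The target is the explicit bound $|e_{i,q,m}(x) - e_{i,q,m}(y)| \leq m|x-y|$. I expect this to need neither the hyperplane $\mathbf H_{i,q}$ nor Lemma \ref{unique direction}, beyond the latter making $e_{i,q,m}$ well defined.

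Write $e = e_{i,q,m}(x)$ and $e' = e_{i,q,m}(y)$. The first step reads off the values of $f$ at the endpoints. Because $[[x - e/m, x+e/m]]$ is an oriented stretch ray of length $2/m$ with midpoint $x$, its definition gives
\begin{equation*}
f(x \pm e/m) = f(x) \pm 1/m,
\end{equation*}
and likewise $f(y \pm e'/m) = f(y) \pm 1/m$.

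The second step applies the $1$-Lipschitz bound to two "crossed" pairs of endpoints: $x+e/m$ against $y-e'/m$, and $y+e'/m$ against $x-e/m$. This yields
\begin{align*}
f(x)-f(y)+\tfrac{2}{m} &\leq |d + w|,\\
f(y)-f(x)+\tfrac{2}{m} &\leq |w - d|,
\end{align*}
where $d = x-y$ and $w = (e+e')/m$. Adding the two inequalities cancels the unknown difference $f(x)-f(y)$ and leaves
\begin{equation*}
\tfrac{4}{m} \leq |d+w| + |w-d|.
\end{equation*}

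The third step bounds the right-hand side by Cauchy--Schwarz and the parallelogram law:
\begin{equation*}
|d+w|+|w-d| \leq \sqrt{2\left(|d+w|^2+|w-d|^2\right)} = 2\sqrt{|w|^2+|d|^2}.
\end{equation*}
Since $e$ and $e'$ are unit vectors, $|e+e'|^2 = 4 - |e-e'|^2$, so $|w|^2 = (4-|e-e'|^2)/m^2$. Squaring the inequality from the second step then gives
\begin{equation*}
\frac{4}{m^2} \leq \frac{4 - |e-e'|^2}{m^2} + |x-y|^2,
\end{equation*}
which simplifies to $|e-e'| \leq m|x-y|$. Hence $e_{i,q,m}$ is $m$-Lipschitz.

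The main obstacle is choosing which endpoint pairs to compare. Comparing like endpoints, such as $x+e/m$ with $y+e'/m$, only produces bounds in terms of $|d + (e-e')/m|$, and these do not control the angle. It is the crossed pairing that makes $f(x)-f(y)$ cancel and brings in $|e+e'|$. Everything after that is the elementary estimate above. Finally, because $e_{i,q,m}$ is bounded and Lipschitz, each chart $\psi_{i,q,m,k}$ on $\Gamma_{i,q,m}\times[-k,k]^2$ is Lipschitz, with constant depending on $m$ and $k$.
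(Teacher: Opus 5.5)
Your argument is correct, and it departs from the paper's at exactly the two steps that matter.

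The paper uses only one crossed pair, $x+e(x)/m$ against $y-e(y)/m$. It then makes two lossy moves:
\begin{itemize}
\item it replaces $f(x)-f(y)$ by the lower bound $-|x-y|$;
\item it separates displacement from direction with the triangle inequality $|d+w|\le|d|+|w|$.
\end{itemize}
From there it reaches $\sin(d_{S^{n-1}}(e(x),e(y)))\le 4m|x-y|$ via the identity $\cos\theta=\sin(\pi-\theta)-1$. It then needs a separate argument to exclude nearly antipodal directions, and a local-to-global patch with $\delta_m$.

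That identity is false: at $\theta=0$ it reads $1=-1$. The quantity under the square root is really $|e(x)+e(y)|^2=2+2\cos(d_{S^{n-1}}(e(x),e(y)))$. Once this is corrected, the paper's chain only gives $|e(x)+e(y)|\ge 2-2m|x-y|$, that is, $|e(x)-e(y)|^2\le 8m|x-y|$. This is a H\"older-$\tfrac12$ bound, which would not make the charts $\psi_{i,q,m,k}$ Lipschitz.

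Your two changes remove precisely these losses. Adding the second crossed pair cancels $f(x)-f(y)$ exactly instead of estimating it. The parallelogram law cancels the cross term $2\langle d,w\rangle$ that the triangle inequality discards. As a result, the deficit $|e(x)-e(y)|^2=4-|e(x)+e(y)|^2$ is controlled by $m^2|x-y|^2$ rather than by $8m|x-y|$. This is the global bound $|e(x)-e(y)|\le m|x-y|$, with no smallness assumption on $|x-y|$ and no case analysis.

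Setting $x=y$ in your inequality also shows that two admissible directions at the same point must coincide. So your argument reproves Lemma \ref{unique direction} as a byproduct.
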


Using Lemma \ref{lipschitz angle}, we compute that each $\psi_{i,q,m,k}$ is Lipschitz as follows. Denoting by $L_e$ the Lipschitz constant of $e_{i,q,m}$ ,we have
\begin{align*}  & |\psi_{i,q,m,k} (x_1, s_1, t_1) - \psi_{i,q,m,k} (x_2, s_2, t_2)| \\
\leq & |x_1 - x_2| + |s_1 e_{i,q,m}(x_1) - s_2 e_{i,q,m} (x_2)| +  |t_1 e_{i,q,m}(x_1) - t_2 e_{i,q,m} (x_2)|\\
\leq & |x_1 - x_2| + 2k|e_{i,q,m}(x_1) - e_{i,q,m}(x_2)| + 2(|s_1 - s_2| + |t_1 - t_2|) \\
 \leq & (2kL_e + 1)|x_1 - x_2| + 2(|s_1 - s_2| + |t_1 - t_2|) \\
\leq & (2kL_e + 5) \,  \text{dist}((x_1, t_1, s_1), (x_2, t_2, s_2)).
\end{align*}
This concludes the proof of Theorem \ref{stretch set theorem}.
\end{proof}

We now prove the auxiliary lemmas above.

\begin{proof}[Proof of Lemma \ref{closed}]
Let $(x_i, y_i) \in \mathbf \Gamma(f)$ be a convergent sequence, converging to some $(x, y)$. For each $i$, we have
\begin{align*}|f(x) - f(y)|& \leq |f(x_i) - f(y_i)| + |f(x) - f(x_i)| + |f(y) - f(y_i)| \\ & \leq |x_i - y_i| + |x - x_i| + |y - y_i|\end{align*}
where in the last line we have used that $(x_i, y_i) \in \mathbf \Gamma(f)$ and the $1$-Lipschitz property of $f$. Likewise, 
$$|f(x) - f(y)| \geq |x_i - y_i| - |x - x_i| - |y - y_i|.$$
Taking the limit in $i$ above, we obtain
$$|f(x) - f(y)| = |x-y|$$
so that $(x, y) \in \mathbf \Gamma(f)$ as desired.
\end{proof} 

\begin{proof}[Proof of Lemma \ref{stretch ray decomposition}]
The containment $\Delta \cup \bigcup_{\ell} \ell \times \ell \subset \mathbf \Gamma(f)$ follows verbatim from the definition of $\mathbf \Gamma(f)$. For the reverse containment, let $(x, y) \in \mathbf \Gamma(f)$. If $(x, y) \in \Delta$, there is nothing to prove, otherwise let us show that $[x, y]$ is a stretch ray. Indeed, let $(a, b) \in [x, y]$. Without loss of generality, $a$ lies closer to $x$ than $b$. Then we have, by the triangle inequality

$$ |f(a) - f(b)| \geq |f(x) - f(y)| - |f(a) - f(x)| - |f(b) -f(y)| \geq |x-y| -|a-x| - |b-y| = |a-b|,$$
where the last line follows because the points $x, a, b, y$ are colinear and monotone along the segment $[x, y]$. Since $f$ is $1$-Lipschitz, we have
$$f(a) - f(b) = |a-b|,$$
which shows that $(a, b) \in \mathbf \Gamma(f)$ for all $a, b \in [x, y]$, so $[x, y]$ is a stretch ray. Thus $(x,y) \in \ell \times \ell$, where $\ell := [x, y]$.
\end{proof}

\begin{proof}[Proof of Lemma \ref{unique direction}]
Let $x \in \Gamma_{i, q, m}$, and suppose $e_1, e_2$ are two distinct directions such that $[[x - e_i, x+ e_i]]$ is an oriented stretch ray. Then we have
$$f(x+ \frac{e_1}{m}) - f(x - \frac{e_2}{m}) = \frac{|e_1| + |e_2|}{m} = \frac{2}{m}$$
while 
$$\text{dist} \left  (x + \frac{e_1}{m}, x - \frac{e_2}{m} \right ) < \frac{2}{m},$$
violating the $1$-Lipschitz property of $f$.
\end{proof}

\begin{proof}[Proof of Lemma \ref{nonempty intersection}]
Let $\ell = [a, b]$ be a stretch ray. Without loss of generality $[[a, b]]$ is its oriented version. Pick by density some hyperplane $\mathbf H_i$ with unit normal $\nu_i$ satisfying $\langle \nu_i, \frac{b - a}{|b-a|}\rangle  := w > 0.$
Let $m := \frac{a + b}{2}$ be the midpoint of $\ell$, and $\pi(m)$ its projection onto $H_i$. 

Then the translate $\mathbf H_i + m - \pi(m)$ intersects $\ell$ uniquely at $m$. Choosing some rational $q$ close to $|m - \pi(m)|$, say within $\delta > 0$, an elementary computation shows that the translate $\mathbf H_{i, q}$ intersects $[a, b]$ uniquely at a point $p$ with $|p-m| < \frac{\delta}{w}$, whenever $\delta > 0$ is small enough.

Thus if we choose $\delta \leq \frac{w|a-b|}{6}$, we obtain that $[p - \frac{b-a}{3}, p + \frac{b-a}{3} ] \subseteq [a, b]$, so that $[  [p - \frac{|a-b|}{3}, p + \frac{|a-b|}{3} ] ]$ is an oriented stretch ray. Thus $\Gamma_{i,q, 3/|a-b|} \cap \ell$ is nonempty as promised.
\end{proof}

\begin{proof}[Proof of Lemma \ref{contains every stretch}]
Let $\ell$ be a stretch ray. By Lemma \ref{nonempty intersection}, there is some $\Gamma_{i,q,m}$ with $\Gamma_{i,q,m} \cap \ell$ nonempty. Let $x$ be such a point in the intersection. Then choosing $k$ larger than the length of $\ell$, we have that $\psi_{i,q,m, k} (\{x\} \times [-k, k]^2)$ contains $\ell \times \ell$ as promised.
\end{proof}

\begin{proof}[Proof of Lemma \ref{lipschitz angle}]
Let $x, y \in \Gamma_{i, q, m}$ be arbitrary, and denote by $e(x)$ and $e(y)$ the uniquely associated stretch directions by Lemma \ref{unique direction}, where we suppress the dependence of $e$ on $i,q, m$ for clarity. By the stretch ray property, we have
\begin{equation}\label{equation1}f(x+\frac{e(x)}{m} ) - f(y - \frac{e(y)}{m} ) = f(x) - f(y) + \frac{2}{m} \geq \frac{2}{m} - |x-y|.\end{equation}

Let us denote by $d_{S^{n-1}}$ the intrinsic geodesic distance on $S^{n-1}$, and note that
\begin{equation}\label{geodesic}d_{S^{n-1}}(e(x), -e(y)) + d_{S^{n-1}}(e(x), e(y)) = \pi\end{equation}
as can be seen by considering a great circle containing $-e(y), e(y)$ and $e(x)$, which shows that $e(x)$ lies on a geodesic connecting $e(y)$ and $-e(y)$, from which the identity (\ref{geodesic}) follows.

Then we have
 \begin{align}  \label{equation2}\text{dist}\left (x+\frac{e(x)}{m}, y - \frac{e(y)}{m}\right ) & \leq \text{dist}\left(x+\frac{e(x)}{m}, x - \frac{e(y)}{m}\right ) + \text{dist} \left (x - \frac{e(y)}{m}, y - \frac{e(y)}{m} \right ) \notag \\
& =  \frac{1}{m}\left ( \sqrt{2 - 2  \cos ( d_{S^{n-1}}(e(x), -e(y)) }  \right )+ |x-y| \notag \\
& = \frac{1}{m} \sqrt {4 - 2  \sin ( d_{S^{n-1}}(e(x), e(y)) } + |x-y|\end{align}
where in the second line we have applied the law of cosines to the triangle with vertices $x+\frac{e(x)}{m}$, $x-\frac{e(y)}{m}$ and $x$, and in the last line we have used the elementary trigonometric identity $cos(\theta) = \sin(\pi-\theta) - 1$, and the identity (\ref{geodesic}).

By the $1$-Lipschitz property of $f$, we have
\begin{equation}\label{1lip}\left |f(x+\frac{e(x)}{m}  ) - f (y - \frac{e(y)}{m} ) \right | \leq \text{dist}\left(x + \frac{e(x)}{m}, y - \frac{e(y)}{m} \right )\end{equation}
so upon substituting equations (\ref{equation1}) and (\ref{equation2}), rearranging terms and applying the triangle inequality we obtain
$$\sin ( d_{S^{n-1}}(e(x), e(y))  \leq 4m|x-y| .$$
Noting that $0 \leq d_{S^{n-1}}(e(x), -e(y))\leq \pi$, by the Taylor expansion of the sine function, we have for all small enough $\delta > 0$, uniformly over all $x, y$ such that $|x - y| \leq \delta$
$$\text{dist}\left (d_{S^{n-1}}(e(x), e(y)), \{0, \mathbb \pi\}\right ) \leq 4m(|x-y| + o_\delta (|x-y|)) \leq 5m |x-y|,$$
where the last inequality holds once $\delta$ is chosen smaller than some absolute constant. As such, we either have
$$d_{S^{n-1}}(e(x), e(y)) \leq 5m|x-y|$$
or 
\begin{equation}\label{latter}|d_{S^{n-1}}(e(x), e(y)) - \pi| \leq 5m|x-y|.\end{equation}
We show that the the latter case (\ref{latter}) cannot happen if $|x-y|$ is small enough. Indeed, in the latter case, from equation (\ref{geodesic}) we would have
$$d_{S^{n-1}} (e(x), -e(y)) \leq 5m|x-y|.$$
Using that the geodesic distance $d_{S^{n-1}} (e(x), -e(y))$ and ambient Euclidean distance $\text{dist}(e(x), e(y))$ are comparable whenever the former distance is small, we obtain that
\begin{align*}\text{dist}(e(x), -e(y)) &\leq 2d_{S^{n-1}} (e(x), -e(y)) \\& \leq 10m|x-y|\end{align*}
for all small enough values of $|x-y|$. From equation (\ref{1lip}), we have
\begin{align*}\frac{2}{m} - |x-y| & \leq \text{dist}(x, y) + \frac{1}{m}\text{dist}(e(x),- e(y)) \\&\leq 11|x-y|\end{align*}
so the the latter case (\ref{latter}) can happen only if $|x-y| \geq \frac{1}{6m}$.

We have thus proven that
$$d_{S^{n-1}}(e(x), e(y)) \leq 5m|x-y|$$
uniformly over all $x, y \in \Gamma_{i, q ,m}$ with $|x-y| < \delta_m$, where the constant $\delta_m$ depends only on $m$. Using that the ambient Euclidean distance is always less than the intrinsic distance on $S^{n-1}$, we obtain
$$|e(x) - e(y)| \leq 5m|x-y|$$
for all $|x-y| <\delta_m.$
To complete this to a global Lipschitz bound, we use the trivial bound $|e(x) - e(y)| \leq 2$, from which we conclude that $e$ is globally Lipschitz with Lipschitz constant $L := \max(5m, \frac{2}{\delta_m})$.
\end{proof}

\section{Sharpness of Theorem \ref{support theorem}}
In this brief section, we give a simple example showing that the Hausdorff dimension upper bound of $n+1$ on the support of optimal plans cannot be improved in general, even for measures with absolutely continuous densities.

Let $\mu := \mathcal L^n \mres [0, 1]^n$, the restriction of Lebesgue measure $\mathcal L^n$ to the unit cube, and let $\nu := \mathcal L^n \mres ([0, 1]^{n-1} \times [-1, 0])$. We claim that there is an optimal plan between $\mu$ and $\nu$ for the $1$-Wasserstein problem (\ref{W1}) whose support has Hausdorff dimension $n+1$. 

Let $\Pi: \mathbb R^n \to \mathbb R^{n-1}$ denote the projection onto the first $n-1$ coordinates, and let
$$\mu = \int_{\mathbb [0, 1]^{n-1}} \mu_x \, d\mathcal L^{n-1}(x)$$
$$\nu = \int_{\mathbb [0, 1]^{n-1}} \nu_x \, d\mathcal L^{n-1}(x)$$
be the disintegration of $\mu, \nu$ respectively with respect to $\Pi$.

As demonstrated in (\cite{Maggi_2023}, Remark 17.4), by Fubini's theorem we can identify 
$$\mu_x =\mathcal H^1 \mres \{x\} \times [0, 1]$$
$$\nu_x = \mathcal H^1 \mres \{x\} \times [-1, 0]$$
for all $x \in [0, 1]^{n-1}$, where $\mathcal H^1$ denotes the Hausdorff measure. 

We take our transport plan $\pi$ to be the relatively independent coupling over the common factor induced by $P$ - that is, we set
$$\pi = \int_{\mathbb [0, 1]^{n-1}} \pi_x \, d\mathcal L^{n-1}(x),$$
where for each $x \in [0, 1]^{n-1}$,
$$\pi_x := \mu_x \otimes \nu_x$$
is the product measure between $\mu_x$ and $\nu_x$.

The $1$-Lipschitz Kantorovich potential $f(x) = x_n$ on $\mathbb R^n$ certifies that $\pi$ is optimal for the $1$-Wasserstein distance, and the support of $\pi$ is easily checked to be 
$$\supp(\pi) = \{(x, t, x, s) \in \mathbb R^{2n}\, | \, x \in [0,1]^{n-1}, t \in [0, 1], s \in [-1, 0]\}$$
which is bi-Lipschitz equivalent to $\mathbb [0,1]^{n+1}$ via the map $(x,t,x,s) \to (x, t, -s)$, and so has Hausdorff dimension $n+1$ as claimed.
\section{Further Questions}
We conclude with some natural further directions for research. We note that although the example in Section 4 shows that the upper bound cannot be improved in general, it does not preclude the existence of optimal transport plans with lower dimensional supports. It is natural to ask whether under sufficient regularity of the source and target measures there exists at least some lower dimensional optimal transport plan. We thus pose:

\textbf{Question 1:} Let $\mu = f  \, \mathcal  L^n, \nu = g \, \mathcal  L^n$ be absolutely continuous probability measures on $\mathbb R^n$ with smooth, compactly supported densities $f, g$. Does there exist an optimal transport plan $\pi$ with $\dim_H (\supp(\pi)) \leq n$?

Noting the key role of Theorem \ref{stretch set theorem}, we are led to ask the following natural generalization, which is itself of intrinsic interest but may also be of some future value in the theory of vector valued optimal transport.

\textbf{Question 2:} Let $f: \mathbb R^n \to \mathbb R^m$ be a Lipschitz map, for $n > m$. Is it true that the stretch set of $f$ is $n+m$ rectifiable?
\printbibliography
\end{document}